\DeclareMathOperator{\End}{End}
\DeclareMathOperator{\Ann}{Ann}
\DeclareMathOperator{\Ext}{Ext}
\DeclareMathOperator{\Ass}{Ass}
\DeclareMathOperator{\Mod}{Mod}
\DeclareMathOperator{\cd}{cd}
\DeclareMathOperator{\ara}{ara}
\newcommand{\ZZ}{\mathbb{Z}}
\newcommand{\D}{\mathscr{D}}
\newcommand{\fm}{\mathfrak{m}}
\theoremstyle{plain}
\newtheorem{thm}{Theorem}[section]
\newtheorem*{thm*}{Theorem}
\newtheorem{bigthm}{Theorem}
\newtheorem{prop}[thm]{Proposition}
\newtheorem{lem}[thm]{Lemma}
\theoremstyle{definition}
\newtheorem{conjecture}[thm]{Conjecture}
\theoremstyle{remark}
\newtheorem{remark}[thm]{Remark}
\newtheorem{example}[thm]{Example}
\newtheorem{question}[thm]{Question}
\numberwithin{equation}{thm}
\begin{document}
\title{Annihilators of $\D$-modules in mixed characteristic}
\author{Rankeya Datta \and Nicholas Switala \and Wenliang Zhang}
\address{Department of Mathematics, Michigan State University, 619 Red Cedar Road, C212 Wells Hall, East Lansing, MI 48824}
\email{dattaran@msu.edu}
\address{Department of Mathematics, Statistics, and Computer Science \\ University of Illinois at Chicago \\ 322 SEO (M/C 249) \\ 851 S. Morgan Street \\ Chicago, IL 60607}
\email{nswitala@uic.edu, wlzhang@uic.edu}

\thanks{The second author gratefully acknowledges NSF support through grant DMS-1604503. The third author is partially supported by the NSF through grant DMS-1606414 and CAREER grant DMS-1752081.}
\subjclass[2010]{Primary 13D45, 13N10}
\keywords{$\D$-modules, local cohomology}

\begin{abstract}
Let $R$ be a polynomial or formal power series ring with coefficients in a DVR $V$ of mixed characteristic with a uniformizer $\pi$. We prove that the $R$-module annihilator of any nonzero $\D(R,V)$-module is either zero or is generated by a power of $\pi$. In contrast to the equicharacteristic case, nonzero annihilators can occur; we give an example of a top local cohomology module of the ring $\mathbb{Z}_2[[x_0, \ldots, x_5]]$ that is annihilated by $2$, thereby answering a question of Hochster in the negative. The same example also provides a counterexample to a conjecture of Lyubeznik and Yildirim.
\end{abstract}

\maketitle

\section{Introduction}\label{intro}
In \cite{HunekeProblemsLocalCohomology}, Huneke discussed 4 basic problems concerning local cohomology; these problems have guided the developments in the study of local cohomology modules for over two decades. As mentioned in the introduction of \cite{HunekeProblemsLocalCohomology}, ``We will find all of these problems are connected with another question: what annihilates the local cohomology?" More concretely, Hochster's \cite[Question 6]{HochsterSurveyLocalCohomology} asks the following:
\begin{question}
\label{Hochster question annihilator}
Is the top local cohomology module of a local Noetherian domain with support in a given ideal faithful? That is, if $R$ is a local Noetherian domain and $I \subseteq R$ is an ideal of cohomological dimension $\delta$, is $H^{\delta}_I(R)$ a faithful $R$-module?
\end{question}

Here $H^{\delta}_I(R)$ being faithful amounts to $\Ann_R(H^{\delta}_I(R))=(0)$. When $R$ is a regular local ring containing a field, Question \ref{Hochster question annihilator} has a positive answer; this was stated in \cite[Theorem 1.1]{LynchAnnLocalCohomology} and was attributed to Hochster and Huneke \cite[Lemma 2.2]{HunekeKohCofinitenessVanishing} in characteristic $p > 0$ and to Lyubeznik in characteristic zero. As stated in \cite[page 543]{LynchAnnLocalCohomology}, when $R$ is a regular local ring of mixed characteristic, Question \ref{Hochster question annihilator} remained open. 

Question \ref{Hochster question annihilator} also stems from a conjecture made by Lynch in \cite{LynchAnnLocalCohomology}; Lynch conjectured that Question \ref{Hochster question annihilator} has a positive answer for all Noetherian local rings even without assuming the ring is a domain. In \cite[Example 3.2]{BahmanpourLynchConjecture}, a counterexample to Lynch's conjecture was found; note that the local ring in \cite[Example 3.2]{BahmanpourLynchConjecture} is not equidimensional.

The main purpose of this paper is to investigate Question \ref{Hochster question annihilator} for regular local rings of mixed characteristic. One of our main results classifies annihilators of $\D$-modules as follows.

\begin{bigthm}[Theorem \ref{annihilators are powers of p}]
Let $R=V[[x_1,\dots,x_n]]$ or $R=V[x_1, \ldots, x_n]$ where $(V,\pi V)$ is a DVR of mixed characteristic $(0,p)$. Let $M$ be a nonzero $\D(R,V)$-module. Either $\Ann_R(M) = (0)$ or $\Ann_R(M) = (\pi^\ell)$ for some $\ell \geq 1$.
\end{bigthm}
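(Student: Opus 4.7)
The plan is to show that $I := \Ann_R(M)$ is a $\D(R,V)$-submodule of $R$ (with its canonical left $\D(R,V)$-module structure) and then to classify such submodules. I expect the classification to read: the only nonzero $\D$-submodules of $R$ are the ideals $(\pi^\ell)$ for $\ell \geq 0$. One inclusion is clear, since each $\pi^\ell R$ is $\D$-stable---every $V$-linear operator, in particular each divided power $\partial_i^{[k]}$, preserves it---so the nontrivial content lies in showing that a nonzero $\D$-submodule must have this form.

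For the first step, I would use the fact that $\D(R,V)$ is generated as a $V$-algebra by $R$ (acting by multiplication) together with the divided-power derivations $\partial_i^{[k]}$, the mixed-characteristic analog of Grothendieck's description of differential operators on a smooth algebra. Closure of $I$ under multiplication by elements of $R$ is automatic. For divided powers, I would apply the Leibniz formula $\partial_i^{[k]}(rm) = \sum_{j=0}^{k} \partial_i^{[j]}(r)\,\partial_i^{[k-j]}(m)$; for $r \in I$ and $m \in M$ the left side vanishes, and the $j=0$ summand on the right collapses to $r \cdot \partial_i^{[k]}(m) = 0$. Induction on $k$ then forces $\partial_i^{[k]}(r) \in I$ for every $k$, with each inductive step using that $\partial_i^{[j]}(r) \in I$ for $j < k$ already kills the intermediate terms in the Leibniz sum.

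For the classification step, I would start with a nonzero $\D$-submodule $N \subseteq R$, set $\ell_0 := \min\{v_\pi(f) : f \in N,\, f \neq 0\}$, and observe that $N \subseteq \pi^{\ell_0} R$. Using $\pi$-torsion-freeness of $R$, one can replace $N$ by the $\D$-submodule $\{r \in R : \pi^{\ell_0} r \in N\}$ to reduce to $\ell_0 = 0$, so that $N$ contains some $f$ with $\bar f \neq 0$ in $\bar R := R/\pi R$. The essential ingredient now is that $\bar R$ is a simple $\D(\bar R, k)$-module (with $k := V/\pi V$); this is classical in characteristic $p$ and follows by applying a divided power $\partial^{[\alpha]}$ to $\bar f$ with $\alpha$ either of top degree (polynomial case) or componentwise minimal in the support (power series case), producing a nonzero element of $k$. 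Consequently, the $\D$-submodule of $R$ generated by $f$ surjects onto $\bar R$, giving some $g = 1 - \pi r \in N$.

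In the power series case the proof would finish immediately: $g \equiv 1$ modulo the maximal ideal $(\pi, x_1, \ldots, x_n)$, hence is a unit, and $1 \in N$. The polynomial case, where $g$ need not be a unit, is the main obstacle; my plan is to handle it by an iterative degree-reduction on $r$. For each multi-index $\alpha$ with $|\alpha| = \deg r$, applying $\partial^{[\alpha]}$ to $g$ produces $-\pi c_\alpha \in N$, where $c_\alpha \in V$ is the $x^\alpha$-coefficient of $r$ (all other contributions vanish because $\beta \ge \alpha$ componentwise and $|\beta| \le \deg r = |\alpha|$ force $\beta = \alpha$). Then $g + \sum_{|\alpha| = \deg r} x^\alpha \cdot (\pi c_\alpha)$ again lies in $N$ and equals $1 - \pi r_1$ with $\deg r_1 < \deg r$. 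Iterating yields some $1 - \pi c \in N$ with $c \in V$; but $1 - \pi c$ reduces to $1$ modulo $\pi V$ and so is a unit in the DVR $V$, and multiplying by its inverse gives $1 \in N$. This shows $N = R$ after the normalization, whence $N = (\pi^{\ell_0})$ originally, completing the classification and the theorem.
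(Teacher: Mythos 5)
Your argument is correct, and it is genuinely different from -- and in one key step more direct than -- the paper's. The main divergence is in how you show that $I = \Ann_R(M)$ interacts with differential operators. You invoke the full divided-power commutation relation $\partial_i^{[k]} \cdot r = \sum_{j=0}^{k} \partial_i^{[j]}(r)\,\partial_i^{[k-j]}$ in $\D(R,V)$ (verified on monomials via Vandermonde, or by base change from $\mathbb{Z}[x]$), which directly yields that $I$ itself is a $\D(R,V)$-submodule of $R$. The paper does not do this: its Lemma \ref{lem: saturation of ann is D-submodule} only iterates the ordinary product rule to get $\partial_i^t(r) = t!\,\partial_i^{[t]}(r) \in J$ and then must pass to the saturation $J = (I:\pi^\infty)$ to absorb the factor $t!$. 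As a result the paper needs two additional steps you avoid entirely: Lemma \ref{annihilators mod p} (the $\pi M = 0$ case, via Boix--Eghbali) and the final downward induction on the minimal $\ell$ with $\pi^\ell \in I$, applied to $\pi M$. Your single observation collapses all of that into ``apply Theorem \ref{D-submodules are powers of p} to $I$.'' For the classification of $\D$-submodules of $R$ itself, the two routes are close in spirit but phrased differently: you normalize by the minimal $\pi$-adic valuation, reduce mod $\pi$, and use $\D$-simplicity of $\overline{R}$ to pull a unit (or $1 - \pi c$ with $c \in V$, in the polynomial case, via the degree-reduction loop) into the submodule; the paper instead chooses a coefficient of minimal $\pi$-adic valuation, applies the matching divided power, and factors out $\pi^\ell$ directly. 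Both are correct; the paper's version is more self-contained and elementary, while yours is more structural (and your reduction-to-$\ell_0=0$ trick is tidy). In short: a correct proof with a cleaner first half than the paper's.
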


Note that local cohomology modules of $R$ are primary examples of $\D(R,V)$-modules ({\it cf.} \S\ref{prelims}), and that investigating local cohomology modules from the $\D$-module viewpoint has proven fruitful over the years ({\it cf.} \cite{LyubeznikDMod}). 

We also answer Question \ref{Hochster question annihilator} in the negative in the case of regular local rings of mixed characteristic by considering a slight modification of Reisner's example.

\begin{bigthm}[Example \ref{main counterexample} and Remark \ref{arithmetic rank}]
Let $A=\ZZ[x_0,\dots,x_5]$ and $\fm=(2,x_0,\dots,x_5) \subseteq A$. Let $R$ be the $\fm$-adic completion of $A$, and let $I$ be the ideal of $R$ generated by the $10$ monomials
\[
\{x_0x_1x_2, x_0x_1x_3, x_0x_2x_4, x_0x_3x_5, x_0x_4x_5, x_1x_2x_5, x_1x_3x_4, x_1x_4x_5, x_2x_3x_4, x_2x_3x_5\}.
\]
Then $\cd(R,I)=\ara(I)=4$ and $\Ann_R(H^4_I(R))=(2)\neq (0)$, where $\ara(I)$ denotes the arithmetic rank of $I$.
\end{bigthm}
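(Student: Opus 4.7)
The 10 monomials in the statement are the minimal non-faces of the classical 6-vertex minimal triangulation $\Delta$ of the real projective plane $\mathbb{RP}^2$ (Reisner's complex), so $R/I$ is the $\fm$-adic completion of the Stanley--Reisner ring $\mathbb{Z}[\Delta]$, and the whole phenomenon traces back to the 2-torsion $H_1(\mathbb{RP}^2;\mathbb{Z}) = \mathbb{Z}/2\mathbb{Z}$. The plan is to compare $H^i_I(R)$ to the analogous modules over $\bar R = R/2R = \mathbb{F}_2[[x_0,\ldots,x_5]]$ (with $\bar I = I\bar R$), where Reisner's non--Cohen--Macaulay phenomenon lives, and over the generic fiber $R[1/2]$, where $\Delta$ looks like a rational homology sphere. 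The bridge is the long exact sequence
\[
\cdots \to H^{i-1}_I(\bar R) \to H^i_I(R) \xrightarrow{2} H^i_I(R) \to H^i_I(\bar R) \to H^{i+1}_I(R) \to \cdots
\]
attached to $0 \to R \xrightarrow{2} R \to \bar R \to 0$, together with $H^i_I(R)[1/2] = H^i_{IR[1/2]}(R[1/2])$.

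To show $\cd(R,I) = 4$: Reisner's theorem gives that $\mathbb{F}_2[\Delta]$ has depth $2$ and dimension $3$, so by Auslander--Buchsbaum $\text{pd}_{\bar R}(\bar R/\bar I) = 4$, and by Peskine--Szpiro $\cd(\bar R, \bar I) = 4$. Conversely, $\mathbb{Q}_2[\Delta]$ is Cohen--Macaulay (vanishing reduced rational homology of $\Delta$), so $\cd(R[1/2], IR[1/2]) = \hgt(I) = 3$. Feeding these into the LES: for $i \geq 6$, both $H^{i-1}_I(\bar R)$ and $H^i_I(\bar R)$ vanish, making multiplication by $2$ on $H^i_I(R)$ an isomorphism, and injectivity into $H^i_I(R)[1/2]=0$ forces $H^i_I(R)=0$. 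For $i = 5$, the LES yields $H^5_I(R) = 2H^5_I(R)$; combining with Theorem A, in the case $\Ann_R(H^5_I(R)) = (2^\ell)$ one obtains $H^5_I(R) = 2^\ell H^5_I(R) = 0$, so the only remaining possibility for nonvanishing $H^5_I(R)$ is faithfulness, which must be ruled out via the finer structure of $\D$-modules supported on $V(I)$ (in particular, using that local cohomology modules admit Lyubeznik filtrations and applying Theorem B). Once $H^5_I(R) = 0$, the LES gives $H^4_I(R)/2H^4_I(R) \cong H^4_I(\bar R) \neq 0$, hence $H^4_I(R) \neq 0$ and $\cd(R,I) = 4$.

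For the annihilator, Theorem A yields $\Ann_R(H^4_I(R)) = (2^\ell)$ for some $\ell \geq 1$ (arguing as above to rule out faithfulness). The task reduces to showing $\ell = 1$, i.e., $2 \cdot H^4_I(R) = 0$. Since the kernel of $H^4_I(R) \xrightarrow{2} H^4_I(R)$ is precisely the image of the connecting map $\delta\colon H^3_I(\bar R) \to H^4_I(R)$ in the LES, it suffices to prove $\delta$ is surjective. The main obstacle is this last step: via Hochster/Mustaţă/Terai-type formulas expressing local cohomology of monomial ideals in terms of simplicial cohomology of links of $\Delta$, $\delta$ should be identified with the simplicial Bockstein on $\mathbb{RP}^2$, which is surjective because the integral torsion $H_*(\mathbb{RP}^2;\mathbb{Z})$ consists of a single $\mathbb{Z}/2$ in degree $1$ (with no higher $2$-power torsion). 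Surjectivity of $\delta$ then yields $2 \cdot H^4_I(R) = 0$ and $\Ann_R(H^4_I(R)) = (2)$, completing the counterexample.
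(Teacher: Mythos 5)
Your topological intuition is correct: the phenomenon does trace to $H_1(\mathbb{RP}^2;\ZZ) = \ZZ/2\ZZ$, and the $\bmod\text{-}2$ / generic-fiber comparison via the long exact sequence attached to $0 \to R \xrightarrow{2} R \to \bar R \to 0$ is a reasonable framework. But as written, your argument has two genuine gaps at exactly the steps that matter, and both are steps the paper handles differently and more cheaply.

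First, the vanishing $H^5_I(R) = 0$. Your long exact sequence gives $H^5_I(R) = 2\,H^5_I(R)$, and $H^5_I(R)$ is $2$-power torsion, but this is not enough: invoking Theorem A requires you to first know $\Ann_R(H^5_I(R)) \neq (0)$, and the fallback you propose (rule out faithfulness using Theorem B) does not go through. Theorem B only tells you that if $\Ann_R(M)=(0)$ then $M$ has infinite $\D$-length; it does not prevent a nonzero, $2$-divisible, $2$-power-torsion $\D(R,V)$-module with a Lyubeznik filtration from having zero annihilator. Indeed $R_2/R$ is exactly such a module. So something extra is needed, and the paper sidesteps the whole issue by simply quoting Schmitt--Vogel (as recorded after Proposition \ref{integer torsion stays}): $I$ is generated up to radical by $4$ elements, so $\cd(A,I) \leq 4$, and flat base change carries this to $R$. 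Your route re-derives (incompletely) a bound that is already available in one line.

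Second, and more seriously, the surjectivity of the connecting map $\delta \colon H^3_I(\bar R) \to H^4_I(R)$ is asserted via an identification with a simplicial Bockstein that is never established. You flag this yourself (``should be identified with the simplicial Bockstein''), but it is the entire content of the claim $2\cdot H^4_I(R)=(0)$. Making it precise is delicate: $H^3_I(\bar R)$ is supported on all of $V(\bar I)$ (height $3$), whereas $H^4_I(R)$ is $\fm$-supported, so one has to understand the image of a map from a large module onto a small one, and the relevant Hochster/Musta\c{t}\u{a}/Terai-type formulas compute the graded pieces of local cohomology in terms of links, not all at once in terms of $\mathbb{RP}^2$ itself. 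The paper avoids this entirely with Proposition \ref{integer torsion stays}: the annihilation of $\Ext^4_A(A/I,A) \cong A/\fm$ by $2$ propagates to $\Ext^4_A(A/I_\ell, A) \cong A/(2, x_0^\ell, \dots, x_5^\ell)$ for every $\ell$ via the free base change $A_\ell = \ZZ[x_0^\ell, \dots, x_5^\ell] \hookrightarrow A$, and taking the direct limit gives $2\cdot H^4_I(A)=(0)$ directly. Nonvanishing of $H^4_I(A)$ is then extracted from injectivity of the transition maps (Lyubeznik's theorem on monomial support), rather than from the long exact sequence. Finally $\Ann_R(H^4_I(R))=(2)$ follows from Theorem A exactly as you say.

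In short: your proposal is a different route with the right heuristic picture, but the two load-bearing steps (killing $H^5$ and proving $\delta$ surjective) are left as sketches, and the first cannot be repaired by Theorem B alone. The paper's argument via $\Ext$ modules and the Veronese-type base change is both shorter and complete, and it is worth internalizing because it turns a topological fact (torsion in $\mathbb{RP}^2$) into an algebraic one ($2$ kills all the $\Ext^4$'s) without ever invoking a Bockstein.
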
 

This turns out to be a counterexample to a conjecture of Lyubeznik and Yildirim as well ({\it cf.} Remark \ref{counter-example to LY}).

The paper is organized as follows. In \S\ref{prelims}, we review some basics of the theory of $\D$-modules; in \S\ref{general annihilators}, we classify the annihilators of $\D(R,V)$-modules when $R$ is a ring of polynomials or formal power series over a DVR $(V,\pi V)$ of mixed characteristic $(0,p)$; in \S\ref{example}, we answer Question \ref{Hochster question annihilator} in the negative.

\subsection*{Acknowledgments} The authors thank Alberto Boix for comments on the paper. The second author thanks Tom Marley, who first interested him in these questions. The third author thanks Uli Walther for discussions. The authors also thank Benjamin Antieau and Kevin Tucker for helpful conversations. The authors are grateful to the referee for useful comments and suggestions.

\section{Preliminaries on $\D$-modules}\label{prelims}

We begin by fixing some conventions. All rings are assumed to have a unit element $1$. If $R$ is a commutative ring and $r \in R$ is an element, we denote by $(r)$ the principal ideal $rR \subseteq R$. All \emph{local} commutative rings are assumed to be Noetherian. When we say that $(V, \pi V, k)$ is a \emph{DVR of mixed characteristic $(0,p)$}, we mean that $V$ is a rank-one discrete valuation ring of characteristic zero whose maximal ideal is the principal ideal $(\pi) = \pi V$ generated by $\pi$, and whose residue field $k = V/\pi V$ has characteristic $p > 0$. If $\omega \in V$, we denote by $\nu_\pi(\omega)$ the $\pi$-adic valuation of $\omega$, that is, $\nu_\pi(\omega)$ is the exponent in the largest power of $\pi$ dividing $\omega$ (so $\nu_\pi(\omega) = 0$ if and only if $\omega$ is a unit in $V$). 

We now provide some necessary background material concerning $\D$-modules. If $S$ is any commutative ring and $A \subseteq S$ is a commutative subring, then the ring $\D(S,A)$ of $A$-linear differential operators on $S$, a subring of $\End_A(S)$, is defined recursively as follows \cite[\S 16]{EGAIV4}.  A differential operator $S \rightarrow S$ of order zero is multiplication by an element of $S$.  Supposing that differential operators of order $\leq j-1$ have been defined, $d \in \End_A(S)$ is said to be a differential operator of order $\leq j$ if, for all $s \in S$, the commutator $[d,s] \in \End_A(S)$ is a differential operator of order $\leq j-1$.  We write $\D^j(S)$ for the set of differential operators on $S$ of order $\leq j$ and set $\D(S,A) = \cup_j \D^j(S)$.  Every $\D^j(S)$ is naturally a left $S$-module.  If $d \in \D^j(S)$ and $d' \in \D^l(S)$, it is easy to prove by induction on $j + l$ that $d' \circ d \in \D^{j+l}(S)$, so $\D(S,A)$ is a ring. 

By a \emph{$\D(S,A)$-module}, we mean a \emph{left} module over the ring $\D(S,A)$. We denote by $\Mod_{\D(S,A)}$ the Abelian category of (left) $\D(S,A)$-modules. The ring $S$ itself has a $\D(S,A)$-module structure; using the quotient rule, we can give a $\D(S,A)$-module structure to the localization $S_f$ for every $f \in S$ in such a way that the natural localization map $S \rightarrow S_f$ is a map of $\D(S,A)$-modules. Using the \v{C}ech complex interpretation of local cohomology, it follows \cite[Example 2.1]{LyubeznikDMod} that the local cohomology modules $H^i_I(S)$ have $\D(S,A)$-module structures for all finitely generated ideals $I \subseteq S$ and all $i \geq 0$.

We will be concerned with the special case in which $S = A[[x_1, \ldots, x_n]]$ (resp. $S = A[x_1, \ldots, x_n]$) is a formal power series (resp. polynomial) ring with coefficients in $A$. In this case, we can describe explicitly the structure of the ring $\D(S,A)$: we have
\[
\D(S,A) = S \langle \partial_i^{[t]} \mid 1 \leq i \leq t, \, t \geq 1 \rangle
\]
by \cite[Thm. 16.11.2]{EGAIV4}, where $\partial_i^{[t]}$ denotes the differential operator $\frac{1}{t!} \frac{\partial^t}{\partial x_i^t}$ (which is well-defined even if the natural number $t$ is not invertible in $S$). If $a \in A$ is an element, then $a \D(S,A)$ is a two-sided ideal of $\D(S,A)$, and it follows from the displayed equality (with $A$ replaced by the quotient $A/(a)$) that
\[
\D(S,A)/a\D(S,A) \cong \D(S/(a), A/(a))
\]
as rings. In particular, a $\D(S/(a), A/(a))$-module is precisely a $\D(S,A)$-module annihilated by $a$.

We will need the following proposition in the sequel.

\begin{prop}\label{Boix zero annihilators}
Let $S = k[[x_1, \ldots, x_n]]$ or $k[x_1, \ldots, x_n]$ for some $n \geq 0$, where $k$ is a field. If $M$ is a nonzero $\D(S,k)$-module, then $\Ann_S(M) = (0)$.
\end{prop}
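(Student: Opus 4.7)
The plan is to show that the annihilator $\Ann_S(M)$ is stable under the action of $\D(S,k)$ on $S$ (viewing $S$ itself as a $\D(S,k)$-module), and then to use the divided-power operators $\partial_i^{[t]} \in \D(S,k)$ to reduce any nonzero element of $\Ann_S(M)$ to a unit. The reduction step differs slightly between the polynomial and power series cases, so the key work is in a stability lemma that applies uniformly.

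The first step is to prove that for every $f \in \Ann_S(M)$ and every multi-index $\alpha = (\alpha_1,\dots,\alpha_n) \in \NN^n$, the element $\partial^{[\alpha]}(f) := \partial_1^{[\alpha_1]}\cdots\partial_n^{[\alpha_n]}(f) \in S$ again lies in $\Ann_S(M)$. I would do this by induction on $|\alpha| = \alpha_1 + \cdots + \alpha_n$, using the divided-power Leibniz identity
\[
\partial^{[\alpha]}(f \cdot m) \;=\; \sum_{\beta + \gamma = \alpha} \partial^{[\beta]}(f)\,\partial^{[\gamma]}(m),
\]
which holds in any $\D(S,k)$-module. Since $f \cdot m = 0$, the left-hand side vanishes; pulling out the $\beta = \alpha$ term and invoking the inductive hypothesis that $\partial^{[\beta]}(f) \in \Ann_S(M)$ for $|\beta| < |\alpha|$ forces $\partial^{[\alpha]}(f) \cdot m = 0$ for every $m \in M$. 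This gives $\partial^{[\alpha]}(f) \in \Ann_S(M)$.

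The second step is to exploit the explicit action $\partial^{[\alpha]}(x^\beta) = \binom{\beta}{\alpha} x^{\beta-\alpha}$ (zero unless $\beta \geq \alpha$ componentwise). Suppose, toward a contradiction, that $\Ann_S(M)$ contains some nonzero $f = \sum_\beta c_\beta x^\beta$. In the power series case $S = k[[x_1,\dots,x_n]]$, choose any $\alpha$ with $c_\alpha \neq 0$; then the constant term of $\partial^{[\alpha]}(f)$ is $c_\alpha \binom{\alpha}{\alpha} = c_\alpha \neq 0$, so $\partial^{[\alpha]}(f)$ is a unit in the local ring $S$. In the polynomial case $S = k[x_1,\dots,x_n]$, choose $\alpha$ with $c_\alpha \neq 0$ and $|\alpha| = \deg f$; then the only surviving term of $\partial^{[\alpha]}(f)$ is $c_\alpha \in k^\times$, again a unit of $S$. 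Either way, $\partial^{[\alpha]}(f) \in \Ann_S(M) \cap S^\times$, forcing $1 \in \Ann_S(M)$ and hence $M = 0$, a contradiction.

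The main obstacle I anticipate is step one, because in positive characteristic one must work exclusively with the divided-power operators $\partial^{[t]}$ rather than ordinary derivatives; the multinomial coefficients $\binom{\beta}{\alpha}$ can vanish in $k$, so one cannot simply argue by extracting coefficients via $\frac{1}{\alpha!}\partial^\alpha$. The Leibniz identity for divided powers circumvents this, and in fact the argument works uniformly across all characteristics. Once the stability of $\Ann_S(M)$ under $\D(S,k)$ is established, the rest is essentially book-keeping with the unique expansion of elements of $S$ as (possibly infinite) sums of monomials.
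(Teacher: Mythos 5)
The paper does not give its own proof of this proposition: it cites the statement to the proof of Theorem 3.6 in Boix--Eghbali (together with a subsequent correction) and does not reproduce the argument. Your self-contained proof is correct, and both steps are sound. For step one, the divided-power Leibniz identity $\partial^{[\alpha]} \cdot f = \sum_{\beta+\gamma=\alpha}\partial^{[\beta]}(f)\,\partial^{[\gamma]}$ is a valid identity in $\D(S,k)$ in all characteristics (on monomials it reduces to Vandermonde's convolution, which is an integer identity), so applying it to $m \in M$ with $fm = 0$, isolating the $\gamma = 0$ term, and invoking the inductive hypothesis that $\partial^{[\beta]}(f) \in \Ann_S(M)$ for $|\beta| < |\alpha|$ indeed yields $\partial^{[\alpha]}(f)\cdot m = 0$. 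For step two, the crucial point is that only the $\beta = \alpha$ term contributes to the relevant coefficient of $\partial^{[\alpha]}(f)$, and $\binom{\alpha}{\alpha} = 1$ never vanishes, so you always extract a unit regardless of the characteristic; this is exactly what sidesteps the possible vanishing of general binomial coefficients in characteristic $p$, which you correctly flag as the obstacle. Your observation that one cannot factor out $t!$ is well taken: the paper itself does exactly that in the mixed-characteristic setting of Lemma \ref{lem: saturation of ann is D-submodule}, where $\pi$-power saturation supplies the needed slack, but that trick has no analogue over a field of positive characteristic, and the divided-power Leibniz rule is the right replacement. Your coefficient-extraction step also closely parallels the paper's own argument for Theorem \ref{D-submodules are powers of p}, which likewise applies $\partial_1^{[\beta_1]}\cdots\partial_n^{[\beta_n]}$ to pull out a coefficient. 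So while the paper outsources this proposition to a reference, your direct argument is both correct and very much in the spirit of the paper's other proofs.
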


Proposition \ref{Boix zero annihilators} is essentially contained in the proof of \cite[Theorem 3.6]{BoixEghbali} which only treats local cohomology modules; it follows from \cite[Proposition 3.3]{BoixEghbaliCorrected}\footnote{The proof of Proposition 3.3 in \cite{BoixEghbali} is incomplete, and was corrected in \cite{BoixEghbaliCorrected}.} and \cite[Theorem 2.4]{BoixEghbali}. Also note that Proposition \ref{Boix zero annihilators} makes no assumption about the characteristic of $k$.


\section{The annihilator of a $\D(R,V)$-module}\label{general annihilators}

Throughout this section, $(V, \pi V, k)$ denotes a fixed DVR of mixed characteristic $(0,p)$, and $R$ denotes either the ring $V[[x_1, \ldots, x_n]]$ or $V[x_1, \ldots, x_n]$ for some $n \geq 0$. In either case, we denote by $\overline{R}$ the ring $R/(\pi)$, which is either a formal power series or polynomial ring over $k$. Our goal in this section is to classify the possible $R$-module annihilators of $\D(R,V)$-modules, and our main result is the following:

\begin{thm}\label{annihilators are powers of p}
Let $M$ be a nonzero $\D(R,V)$-module. Either $\Ann_R(M) = (0)$ or $\Ann_R(M) = \pi^\ell R$ for some $\ell \geq 1$.
\end{thm}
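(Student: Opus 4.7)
Let $I = \Ann_R(M)$ and assume $I \neq 0$. The plan is to show that $I$ is stable under all divided-power derivations on $R$, and then to extract a generator of the form $\pi^\ell$ by combining this stability with the fact that $\pi$ is central in $\D(R,V)$.

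First, I would prove by induction on $t \geq 1$ that $\partial_i^{[t]}(r) \in I$ for every $r \in I$ and every $i$. For $m \in M$, the divided-power Leibniz identity $\partial_i^{[t]} \cdot r = \sum_{a+b=t} \partial_i^{[a]}(r) \cdot \partial_i^{[b]}$ in $\D(R,V)$ gives
\[
0 \;=\; \partial_i^{[t]}(rm) \;=\; \sum_{a+b=t} \partial_i^{[a]}(r)\,\partial_i^{[b]}(m);
\]
the $a = 0$ term is zero since $r \in I$, the terms with $1 \le a < t$ are zero by the inductive hypothesis applied to $\partial_i^{[a]}(r)$, so only $\partial_i^{[t]}(r) \cdot m$ survives, forcing $\partial_i^{[t]}(r) \in I$. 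Iterating, $\partial^{[\alpha]}(I) \subseteq I$ for every multi-index $\alpha$. The same Leibniz rule applied to $\pi$ shows $\pi$ is central in $\D(R,V)$ (since $\partial_i^{[a]}(\pi) = 0$ for $a \ge 1$), so $\partial^{[\alpha]}(\pi^k s) = \pi^k \partial^{[\alpha]}(s)$ for all $k, s$.

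In the formal power series case, I would let $\ell = \max\{k \ge 0 : I \subseteq (\pi^k)R\}$, which is finite because $I \neq 0$ and $\bigcap_k (\pi^k)R = 0$ in the Noetherian domain $R$. Choose $r \in I$ with $r = \pi^\ell s$ and $s \notin \pi R$. Expanding $s = \sum c_\alpha x^\alpha$, pick $\alpha_0$ of minimal total degree such that $c_{\alpha_0} \in V^*$. Then $\partial^{[\alpha_0]}(s)$ has constant term $c_{\alpha_0} \in V^*$, so it lies outside the maximal ideal $(\pi, x_1, \ldots, x_n)$ of the local ring $R = V[[x_1, \ldots, x_n]]$ and is a unit. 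Therefore $\partial^{[\alpha_0]}(r) = \pi^\ell \partial^{[\alpha_0]}(s) \in I$ is a unit multiple of $\pi^\ell$, so $\pi^\ell \in I$, and combining with $I \subseteq (\pi^\ell)R$ gives $I = (\pi^\ell)R$. Finally, $\ell \ge 1$ since $M \ne 0$ forces $I \ne R$.

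The polynomial case is the main obstacle, because units of $R = V[x_1, \ldots, x_n]$ are only $V^*$, so the ``find a unit of $R$'' trick fails. Instead I would use maximal-degree exponents to produce elements of $V$ directly. For any nonzero $r \in I$, applying $\partial^{[\alpha_0]}$ with $\alpha_0$ of maximal total degree among exponents in $r$ yields $\partial^{[\alpha_0]}(r) = c_{\alpha_0} \in V \setminus \{0\}$, so $I \cap V \ne 0$. Together with $1 \notin I$ and the DVR structure, $I \cap V = (\pi^\ell)V$ for some $\ell \ge 1$, so $(\pi^\ell)R \subseteq I$. For the reverse inclusion, given $r \in I$ define $r' = \sum_{\nu_\pi(c_\alpha) < \ell} c_\alpha x^\alpha$; then $r - r' \in (\pi^\ell)R \subseteq I$ forces $r' \in I$. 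If $r \notin (\pi^\ell)R$ then $r' \ne 0$, and applying $\partial^{[\alpha_0]}$ with $\alpha_0$ of maximal total degree among exponents in $r'$ yields an element of $I \cap V$ of $\pi$-valuation strictly less than $\ell$, contradicting $I \cap V = (\pi^\ell)V$. Hence $r \in (\pi^\ell)R$, so $I = (\pi^\ell)R$.
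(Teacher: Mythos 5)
Your proof is correct, and it takes a genuinely different and in some ways more direct route than the paper's.

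The key structural difference is your observation that $I = \Ann_R(M)$ is \emph{itself} a $\D(R,V)$-submodule of $R$, proved via the divided-power Leibniz identity
\[
\partial_i^{[t]}(rs) = \sum_{a+b=t} \partial_i^{[a]}(r)\,\partial_i^{[b]}(s),
\]
together with an induction that kills all the $0 < a < t$ terms. The paper instead proves only that the $\pi$-saturation $J = (I : \pi^\infty)$ is a $\D(R,V)$-submodule (its Lemma~\ref{lem: saturation of ann is D-submodule}): it works with the ordinary partials $\partial_i$ and the ordinary Leibniz rule, obtains $\partial_i^t(r) \in J$, and then must divide by $t! = p^{\nu_p(t!)} \cdot (\text{unit})$ to pass to $\partial_i^{[t]}(r)$, which is exactly why it can only land in the saturation rather than in $I$ itself. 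Having established $J$ is a $\D$-submodule, the paper deduces $\pi^\ell \in I$ for some minimal $\ell$ and then runs an induction on $\ell$ whose base case (Lemma~\ref{annihilators mod p}) invokes the Boix--Eghbali result (Proposition~\ref{Boix zero annihilators}) that nonzero $\D(\overline R,k)$-modules have zero $\overline R$-annihilator. Your argument sidesteps both the induction on $\ell$ and Proposition~\ref{Boix zero annihilators} entirely: once $I$ is known to be a $\D(R,V)$-submodule, you simply run (a slight variant of) the proof of Theorem~\ref{D-submodules are powers of p} directly on $I$ --- extracting a unit multiple of $\pi^\ell$ in the complete local case, and in the polynomial case extracting elements of $I \cap V$ by applying $\partial^{[\alpha_0]}$ for $\alpha_0$ of maximal total degree. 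Both cases are handled correctly (your observation that only the top-degree exponent survives $\partial^{[\alpha_0]}$ is exactly right, and the $r = r' + (r - r')$ splitting in the polynomial case cleanly closes the argument). What the paper's route buys, by contrast, is that its intermediate Lemma~\ref{annihilators mod p} is independently reusable, and its deduction cleanly factors through the structural Theorem~\ref{D-submodules are powers of p} as a black box.

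One remark for clarity: in the power series case the minimality of the total degree of $\alpha_0$ is not actually needed --- any $\alpha_0$ with $c_{\alpha_0}$ a unit makes the constant term of $\partial^{[\alpha_0]}(s)$ a unit --- but it does no harm. The rest of the argument is tight.
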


In order to prove Theorem \ref{annihilators are powers of p}, we begin with a classification of $\D(R,V)$-submodules of $R$.

\begin{thm}\label{D-submodules are powers of p}
Let $I \subseteq R$ be a nonzero $\D(R,V)$-submodule of $R$. There exists a natural number $\ell \geq 0$ such that $I = \pi^\ell R$.
\end{thm}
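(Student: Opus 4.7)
The plan is to define $\ell$ to be the largest integer with $I \subseteq \pi^\ell R$ (equivalently, the minimum $\pi$-valuation of any coefficient of any nonzero element of $I$), and then to show $\pi^\ell \in I$. Combined with the trivial inclusion $I \subseteq \pi^\ell R$, this will yield $I = \pi^\ell R$. The quantity $\ell$ is a finite non-negative integer: since $V$ is a DVR, $\bigcap_k \pi^k V = 0$, and this transfers to $R$ coefficient-by-coefficient in either the polynomial or the formal power series case. Because $\D(R,V)$-operators are $V$-linear, they preserve rather than lower $\pi$-valuations, so extracting a bare $\pi^\ell$ from a member of $I$ is the key obstacle; this extraction requires slightly different ideas for the two shapes of $R$.

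For $R = V[[x_1, \dots, x_n]]$, I would pick a nonzero $f = \sum_\alpha a_\alpha x^\alpha \in I$ and a multi-index $\alpha_0$ with $\nu_\pi(a_{\alpha_0}) = \ell$, so $a_{\alpha_0} = \pi^\ell u$ with $u \in V^\times$. The operator $\partial^{[\alpha_0]} := \prod_i \partial_i^{[\alpha_{0,i}]} \in \D(R,V)$ produces $g := \partial^{[\alpha_0]}(f) \in I$ whose coefficients all have the form $a_\beta \binom{\beta}{\alpha_0}$ for some $\beta \geq \alpha_0$, with $\binom{\beta}{\alpha_0} \in \mathbb{Z} \subseteq V$ and $\nu_\pi(a_\beta) \geq \ell$, and whose constant term is precisely $a_{\alpha_0} = \pi^\ell u$. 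Writing $g = \pi^\ell \tilde g$, we have $\tilde g(0) = u \in V^\times$, so $\tilde g$ avoids the maximal ideal $(\pi, x_1, \dots, x_n)$ of the local ring $R$ and is therefore a unit. This gives $\pi^\ell = g \tilde g^{-1} \in I$.

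The polynomial case cannot use this unit argument, since $R = V[x_1, \dots, x_n]$ is not local. I would instead induct on $n$: the base case $n = 0$ is immediate, as $\D(V,V)$-submodules of $V$ are just ideals, hence powers of $\pi V$. For the inductive step, set $J := I \cap V[x_1, \dots, x_{n-1}]$. Since $\D(V[x_1, \dots, x_{n-1}], V) \subseteq \D(R,V)$ preserves $V[x_1, \dots, x_{n-1}]$, $J$ is a $\D(V[x_1, \dots, x_{n-1}], V)$-submodule of $V[x_1, \dots, x_{n-1}]$, and it is nonzero because the leading $x_n$-coefficient of any nonzero $f \in I$ equals $\partial_n^{[\deg_{x_n}(f)]}(f) \in J$. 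By the induction hypothesis, $J = \pi^{\ell'} V[x_1, \dots, x_{n-1}]$ for some $\ell' \geq 0$, which gives $\pi^{\ell'} R \subseteq I$. For the reverse inclusion, I would take any $f = \sum_{i=0}^d c_i x_n^i \in I$ with $c_i \in V[x_1, \dots, x_{n-1}]$ and iteratively extract $c_d = \partial_n^{[d]}(f)$, then $c_{d-1} = \partial_n^{[d-1]}(f - c_d x_n^d)$, and so on; each $c_i$ lies in $J = \pi^{\ell'} V[x_1, \dots, x_{n-1}]$, so $f \in \pi^{\ell'} R$. The hard part is thus isolating $\pi^\ell$, which the power series case resolves via the wealth of units in a local ring, while the polynomial case instead exploits the finite $x_n$-degree to recurse down to the base case $R = V$.
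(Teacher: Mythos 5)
Your power series argument is the same as the paper's: apply $\partial^{[\alpha_0]}$ to expose a constant term of minimal $\pi$-valuation $\ell$, factor out $\pi^\ell$, and invoke the fact that $R$ is local (so the remaining factor, having unit constant term, is a unit). This part is correct and essentially verbatim the paper's proof.

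The polynomial case is where you diverge, and your route is genuinely different but also correct. The paper instead picks the grlex leading term $\omega_\gamma x^\gamma$ of $f$ and observes that $\partial_1^{[\gamma_1]}\cdots\partial_n^{[\gamma_n]}(f) = \omega_\gamma \in V$, because every other exponent vector appearing in $f$ is strictly smaller in some coordinate (a property any admissible term order gives you). This extracts a pure element of $V$ immediately; defining $\ell$ as the minimal valuation over such leading coefficients, the reverse inclusion $I\subseteq(\pi^\ell)$ follows by multiplying any offending $f$ by monomials to promote the bad coefficient to the leading one. Your approach instead inducts on $n$: you form $J = I\cap V[x_1,\dots,x_{n-1}]$, show it is a nonzero $\D(V[x_1,\dots,x_{n-1}],V)$-submodule (nonzero because $\partial_n^{[\deg_{x_n}f]}(f)$ lands in it), apply the inductive hypothesis, and then peel off the $x_n$-coefficients $c_d, c_{d-1},\dots$ of an arbitrary $f\in I$ one at a time via $\partial_n^{[\cdot]}$ applied to successive truncations, showing each lies in $J$. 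Both arguments are elementary; the paper's is shorter and avoids any recursion, while yours is term-order-free and makes visible the stability of the statement under adding a variable. Either is fine, and yours is a correct alternative.
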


\begin{proof}
We write elements $f \in R$ in multi-index notation as follows: 
\[
f = \sum_{\mathbf{\beta} \in (\mathbb{Z}_{\geq 0})^n} \omega_{\mathbf{\beta}} \mathbf{x}^{\mathbf{\beta}},
\] 
where all $\omega_{\mathbf{\beta}} \in V$ and if $\beta = (\beta_1, \ldots, \beta_n) \in (\mathbb{Z}_{\geq 0})^n$, then $\mathbf{x}^{\mathbf{\beta}}$ denotes the monomial $x_1^{\beta_1} \cdots x_n^{\beta_n}$. When $R$ is a polynomial ring over $V$, we of course have $\omega_{\mathbf{\beta}} = 0$ for almost all $\beta \in (\mathbb{Z}_{\geq 0})^n$.

We prove the formal power series case first. Let $R = V[[x_1, \ldots, x_n]]$, let $I \subseteq R$ be a nonzero $\D(R,V)$-submodule, and let $f \in I$ be given. If there exists $\mathbf{\beta} \in (\mathbb{Z}_{\geq 0})^n$ such that $\omega_{\mathbf{\beta}}$ is a unit in $V$ (that is, $\nu_\pi(\omega_{\mathbf{\beta}}) = 0$), then $I = R = (\pi^0)$, since
\[
\partial_1^{[\beta_1]} \cdots \partial_n^{[\beta_n]}(\omega_{\mathbf{\beta}} x_1^{\beta_1} \cdots x_n^{\beta_n}) = \omega_{\mathbf{\beta}},
\]
and so, since $\partial_1^{[\beta_1]} \cdots \partial_n^{[\beta_n]}(f)$ (which belongs to $I$ by hypothesis) has a unit constant term, it is itself a unit in $R$. On the other hand, assume that for \emph{every} $f = \sum_{\mathbf{\beta} \in (\mathbb{Z}_{\geq 0})^n} \omega_{\mathbf{\beta}} \mathbf{x}^{\mathbf{\beta}} \in I$, we have $\nu_\pi(\omega_{\mathbf{\beta}}) > 0$ for all $\mathbf{\beta} \in (\mathbb{Z}_{\geq 0})^n$. Under this assumption, let $\ell$ be the minimal value of $\nu_\pi(\omega_{\mathbf{\beta}})$ among all $\omega_{\mathbf{\beta}}$ occurring as coefficients in any $f \in I$. This $\ell$ is a well-defined, nonzero natural number; we claim that $I = (\pi^\ell)$. It is clear that $I \subseteq (\pi^\ell)$. For the converse inclusion, choose $f = \sum_{\mathbf{\beta} \in (\mathbb{Z}_{\geq 0})^n} \omega_{\mathbf{\beta}} \mathbf{x}^{\mathbf{\beta}} \in I$ such that for some $\mathbf{\beta} \in (\mathbb{Z}_{\geq 0})^n$ we have $\nu_\pi(\omega_{\mathbf{\beta}}) = \ell$. Applying the differential operator $\partial_1^{[\beta_1]} \cdots \partial_n^{[\beta_n]}$, we obtain an element $g \in I$ whose \emph{constant} term is of the form $\pi^\ell$ times a unit in $V$. But by the minimality $\ell$, every other coefficient in $g$ is divisible by $\pi^\ell$; factoring out $\pi^\ell$, we can write $g$ as $\pi^\ell$ times a unit $h$ in $R$, from which it follows that $(\pi^\ell h)h^{-1} = \pi^\ell \in I$. Thus,  $I = (\pi^\ell)$ as claimed.

On the other hand, suppose that $R = V[x_1, \ldots, x_n]$. Again let $I \subseteq R$ be a nonzero $\D(R,V)$-submodule, and let $f \in I$ be given. Let $\gamma = (\gamma_1, \ldots, \gamma_n) \in (\mathbb{Z}_{\geq 0})^n$ be such that $\omega_{\mathbf{\gamma}} \mathbf{x}^{\mathbf{\gamma}}$ is the \emph{leading term} of $f$ with respect to the grlex term order. Then $\partial_1^{[\gamma_1]} \cdots \partial_n^{[\gamma_n]}(f) = \omega_{\mathbf{\gamma}} \in I$. Scaling by a unit if needed, we conclude that $\pi^{\nu_\pi(\omega_{\mathbf{\gamma}})} \in I$. Now let $\ell$ be the minimal $\pi$-adic valuation of any of the (grlex) leading coefficients of elements of $I$. Since $I \neq (0)$, $\ell$ is a natural number. By the preceding argument, $\pi^\ell \in I$. We claim that, conversely, $I \subseteq (\pi^\ell)$ and so the two are equal. Indeed, if $f \in I$ is not divisible by $\pi^\ell$, then some nonzero term $\omega_{\mathbf{\beta}} \mathbf{x}^{\mathbf{\beta}}$ has $\nu_\pi(\omega_{\mathbf{\beta}}) < \ell$. Let $\omega_{\mathbf{\beta}} \mathbf{x}^{\mathbf{\beta}}$ be the greatest term (under the grlex order) whose coefficient has $\pi$-adic valuation less than $\ell$. That is, $f$ can be written as $f=f_1+f_2$ where $\omega_{\mathbf{\beta}} \mathbf{x}^{\mathbf{\beta}}$ is the leading term of $f_1$ and $\pi^{\ell}$ divides $f_2$. Since $\pi^{\ell}
\in I$, this implies that $f_1\in I$ and its leading coefficient $\omega_{\mathbf{\beta}}$ is not divisible by $\pi^{\ell}$, a contradiction to the hypothesis on $\ell$. This completes the proof.
\end{proof}

Before we proceed to a proof of Theorem \ref{annihilators are powers of p}, we need the following lemma. In this lemma, and the rest of the results in this section, the proofs for the formal power series and polynomial cases are identical.

\begin{lem}\label{lem: saturation of ann is D-submodule}
Let $M$ be a $\D(R,V)$-module. Set $I=\Ann_R(M)$ and 
\[
J = (I:\pi^{\infty}) := \{a\in R\mid a\pi^m\in I\ {\rm for\ some\ integer\ }m \geq 0\}.
\] 
Then $J$ is a $\D(R,V)$-submodule of $R$.
\end{lem}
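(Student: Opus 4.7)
The plan is to reduce the claim to the statement that each divided-power operator $\partial_i^{[t]}$ carries the ideal $I = \Ann_R(M)$ into itself, and then to prove this by induction on $t$ using the divided-power Leibniz rule. Since $\D(R,V) = R\langle \partial_i^{[t]}\rangle$ (by the explicit description recalled in \S\ref{prelims}) and since $J$ is an ideal of $R$, closure of $J$ under $\D(R,V)$ reduces to showing $\partial_i^{[t]}(J) \subseteq J$ for every $i$ and $t$. Given $a \in J$, pick $m \geq 0$ with $\pi^m a \in I$; the $V$-linearity of $\partial_i^{[t]}$ then gives $\pi^m \partial_i^{[t]}(a) = \partial_i^{[t]}(\pi^m a)$. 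Hence it is enough to prove the stronger statement $\partial_i^{[t]}(I) \subseteq I$, since this yields $\pi^m \partial_i^{[t]}(a) \in I$ and therefore $\partial_i^{[t]}(a) \in J$.

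The heart of the argument is then to show, by induction on $t \geq 0$, that $\partial_i^{[t]}(b) \in I$ for every $b \in I$. The case $t=0$ is immediate. For the inductive step, the divided-power Leibniz rule $\partial_i^{[t]}(bc) = \sum_{s=0}^{t} \partial_i^{[s]}(b)\, \partial_i^{[t-s]}(c)$ (valid for every $c \in R$) yields the commutator identity
\[
[\partial_i^{[t]}, b] \;=\; \sum_{s=1}^{t} \partial_i^{[s]}(b) \cdot \partial_i^{[t-s]}
\]
in $\D(R,V)$, where on the right each $\partial_i^{[s]}(b) \in R$ is interpreted as an order-zero (multiplication) operator. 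Because $b$ annihilates $M$, the left-hand side acts on any $m' \in M$ as $\partial_i^{[t]}(bm') - b \cdot \partial_i^{[t]}(m') = 0-0=0$. Applying the right-hand side to $m'$ and invoking the inductive hypothesis — that each $\partial_i^{[s]}(b)$ with $1 \leq s \leq t-1$ already lies in $I$, hence annihilates $M$ — kills every summand with $s<t$. The surviving $s=t$ term is $\partial_i^{[t]}(b)\cdot m' = 0$, so $\partial_i^{[t]}(b) \in I$, completing the induction.

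The main point requiring care is notational: one must carefully distinguish the element $\partial_i^{[t]}(b) \in R$ (the image of $b$ under $\partial_i^{[t]}$ viewed as a $V$-linear map $R \to R$) from the operator $\partial_i^{[t]} \in \D(R,V)$ acting on the abstract module $M$ via its $\D$-module structure. The commutator identity above is first verified as an equality of operators on $R$, equivalently as an equality in $\D(R,V) \subseteq \End_V(R)$, and from there transfers automatically to any $\D(R,V)$-module. This is also the place where the mixed-characteristic setting genuinely intervenes: when $t!$ is not a unit in $V$, the operator $\partial_i^{[t]}$ is not a polynomial in first-order derivations, and one is forced to use the divided-power form of the Leibniz rule in place of the classical one.
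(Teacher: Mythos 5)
Your proof is correct, but it takes a genuinely different route from the paper's, and in fact establishes something strictly stronger. The paper never shows that $I = \Ann_R(M)$ itself is $\D(R,V)$-stable. Instead it argues directly on the saturation $J$: it proves $\partial_i(J) \subseteq J$ from the \emph{ordinary} (order-one) Leibniz rule, iterates to conclude $\partial_i^t(J) \subseteq J$, rewrites $\partial_i^t = t!\,\partial_i^{[t]}$ to get $t!\,\partial_i^{[t]}(r) \in J$, and then uses the $\pi$-saturation of $J$ (together with the fact that integers prime to $p$ are units in $R$) to absorb the power of $\pi$ dividing $t!$ and conclude $\partial_i^{[t]}(r) \in J$. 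In other words, for the paper the passage to $J$ is not a convenience but the essential device for defeating the factorial. Your induction on $t$ via the divided-power Leibniz rule
\[
\partial_i^{[t]}(bc) = \sum_{s=0}^{t} \partial_i^{[s]}(b)\,\partial_i^{[t-s]}(c),
\]
read as a commutator identity in $\D(R,V)$, defeats the factorial at the source: no divisions occur, so the argument runs directly on $I$ and shows that $\Ann_R(M)$ is itself a $\D(R,V)$-submodule of $R$ for any $\D(R,V)$-module $M$. Combined with Theorem~\ref{D-submodules are powers of p}, that would give Theorem~\ref{annihilators are powers of p} immediately when $I \neq (0)$, bypassing both the saturation $J$ and the paper's subsequent induction on $\ell$. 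What the paper's route buys in exchange is that it only needs the ordinary Leibniz rule; what yours buys is a shorter chain of deductions and the stronger structural fact about $I$. The one small thing worth spelling out in a final write-up is a proof (or citation) of the divided-power Leibniz identity as an equality in $\D(R,V)$, e.g., via the observation that $f \mapsto \sum_{t \geq 0} \partial_i^{[t]}(f)\,T^t$ defines a $V$-algebra homomorphism $R \to R[\![T]\!]$; but that identity is standard and valid over any base ring.
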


\begin{proof}
Let $p = \pi^\ell u$, for some $u \in V^{\times}$. Then it is easy to see that 
\[
J = (I:\pi^\infty) = (I:(\pi^\ell)^\infty) = (I:p^\infty),
\] 
and so, it suffices for us to show that $J = (I:p^\infty) = \{a\in R\mid ap^m\in I\ {\rm for\ some\ integer\ }m \geq 0\}$ is a $\D(R,V)$-submodule of $R$. It further suffices to show that $\partial_i^{[t]}(r)$ remains in $J$ for every $1\leq i\leq n$, $t\geq 1$, and $r\in J$.

Since $r\in J$, there is an integer $\ell$ such that $(p^\ell r)M=0$. First we consider the case when $t=1$. For each $m\in M$, we have
\[
0=\partial_i(p^\ell rm)=p^\ell \partial_i(rm)=p^\ell(\partial_i(r)m+r\partial_i(m))=p^\ell\partial_i(r)m,
\] 
which shows that $\partial_i(r)\in J$. Now an easy induction on $t$ shows that $\partial_i^t(r)\in J$ for all $t \geq 1$. Since $\partial_i^t=t!\partial_i^{[t]}$, it follows that $t!\partial_i^{[t]}(r)\in J$ for all $t \geq 1$. Since every integer coprime to $p$ is a unit in $R$, we have $p^{\nu_\pi(t!)}\partial_i^{[t]}(r)\in J$. By definition of $J = (I:p^\infty)$, there exists $m \geq 0$ such that $p^m(p^{\nu_\pi(t!)}\partial_i^{[t]}(r)) = p^{m+\nu_\pi(t!)}\partial_i^{[t]}(r) \in I$; again by definition of $J$, this means $\partial_i^{[t]}(r) \in J$.
\end{proof}

We will also need the following consequence of Proposition \ref{Boix zero annihilators}.

\begin{lem}\label{annihilators mod p}
Let $M$ be a nonzero $\D(R,V)$-module such that $\pi M = (0)$. Then $\Ann_R(M) = \pi R$.
\end{lem}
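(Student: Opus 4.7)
The plan is to leverage the isomorphism $\D(R,V)/\pi\D(R,V) \cong \D(\overline{R}, k)$ established in the preliminaries (applied with $a = \pi$) together with Proposition \ref{Boix zero annihilators}, which guarantees vanishing annihilators for nonzero $\D(\overline{R}, k)$-modules.

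First I would observe the easy inclusion: by hypothesis $\pi M = (0)$, so $\pi \in \Ann_R(M)$, and hence $\pi R \subseteq \Ann_R(M)$. For the reverse inclusion, I would use that any $\D(R,V)$-module killed by $\pi$ naturally acquires a $\D(\overline{R}, k)$-module structure, via the ring isomorphism
\[
\D(R,V)/\pi \D(R,V) \cong \D(R/(\pi), V/(\pi)) = \D(\overline{R}, k)
\]
recorded in \S\ref{prelims}. Since $M$ is nonzero as a $\D(R,V)$-module (hence nonzero as a $\D(\overline{R}, k)$-module), Proposition \ref{Boix zero annihilators} applied to $\overline{R} = k[[x_1, \ldots, x_n]]$ or $k[x_1, \ldots, x_n]$ yields $\Ann_{\overline{R}}(M) = (0)$.

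Finally, I would transfer this back to $R$: if $a \in \Ann_R(M)$, then the image $\overline{a} \in \overline{R}$ annihilates $M$ as an $\overline{R}$-module, so $\overline{a} \in \Ann_{\overline{R}}(M) = (0)$, forcing $a \in \pi R$. Combining both inclusions gives $\Ann_R(M) = \pi R$. There is no real obstacle here; the lemma is essentially a direct reduction mod $\pi$ combined with the quoted characteristic-$p$ (or characteristic-$0$, depending on $k$) statement, and the only thing to be careful about is to verify that $M$ remains nonzero after the identification, which is automatic.
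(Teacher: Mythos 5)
Your proposal matches the paper's proof essentially verbatim: both establish $\pi R \subseteq \Ann_R(M)$ trivially, then pass to the $\D(\overline{R},k)$-module structure via $\D(R,V)/\pi\D(R,V)\cong\D(\overline{R},k)$ and invoke Proposition~\ref{Boix zero annihilators} to get $\Ann_{\overline{R}}(M)=(0)$, hence $\Ann_R(M)\subseteq\pi R$. No discrepancy.
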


\begin{proof}
By hypothesis, $(\pi) \subseteq \Ann_R(M)$. For the converse inclusion, observe that since $M$ is annihilated by $\pi$, it has a natural structure of $\D(R,V)/\pi\D(R,V) = \D(R/(\pi), V/(\pi)) = \D(\overline{R}, k)$-module. Since $M \neq (0)$ and $k$ is a field, we have $\Ann_{\overline{R}}(M) = (0)$ by Proposition \ref{Boix zero annihilators}, so that $\Ann_R(M) \subseteq (\pi)$. 
\end{proof}

We are now ready to prove Theorem \ref{annihilators are powers of p}.

\begin{proof}[Proof of Theorem \ref{annihilators are powers of p}]
Let $I = \Ann_R(M)$ and assume that $I \neq (0)$. By Lemma \ref{lem: saturation of ann is D-submodule}, $J = (I:\pi^{\infty})$ is a nonzero $\D(R,V)$-submodule of $R$, so by Theorem \ref{D-submodules are powers of p}, we have $J = (\pi^e)$ for some natural number $e \geq 1$. In particular, $\pi^e \in J$, so for some $\ell \geq e$, $\pi^\ell \in I$ by definition of $J$. Assume that $\ell$ is the minimal integer such that $\pi^\ell \in I$. Since $M \neq (0)$, we must have $\ell \geq 1$. 

We will use induction on $\ell$ to show that $I = (\pi^\ell)$. The base case, $\ell = 1$, is precisely Lemma \ref{annihilators mod p}. If $\ell \geq 2$, we consider $\pi M$. The definition of $\ell$ implies that $\pi M\neq (0)$. Since $\pi \in V$, the module $\pi M$ is naturally a $\D(R,V)$-submodule of $M$. Applying the induction hypotheses to $\pi M$ (which is annihilated by $\pi^{\ell-1}$), by minimality of $\ell$ we have $\Ann_R(\pi M)=(\pi^{\ell-1})$. It follows immediately that $\Ann_R(M)=(\pi^{\ell})$.  
\end{proof}

Theorem \ref{annihilators are powers of p}, in conjunction with Theorem \ref{D-submodules are powers of p},  has the following interpretation: the $R$-module annihilators of $\D(R,V)$-modules are precisely the $\D(R,V)$-submodules of $R$.


\section{Reisner's example in mixed characteristic}\label{example}
The main purpose of this section is to produce an example of a top local cohomology module of a regular local ring of mixed characteristic that has nonzero annihilator. Such an example provides a negative answer to Question \ref{Hochster question annihilator} even in the special case of regular rings. We will begin with the following observation.

\begin{prop}
\label{integer torsion stays}
Let $A=\ZZ[x_1,\dots,x_n]$ for some $n \geq 0$ and let $I \subseteq A$ be a monomial ideal.
\begin{enumerate}[(a)]
\item For all $\ell \geq 1$, let $I_\ell$ be the ideal generated by the $\ell$-th powers of a given set of monomial generators for $I$. Suppose that $0 \neq \alpha \in \ZZ$ is such that $\alpha$ annihilates $\Ext^j_A(A/I,A)$ for some $j \geq 0$. Then $\alpha$ annihilates $\Ext^j_A(A/I_\ell,A)$ for all $\ell \geq 1$.
\item If $\alpha$ is as in part (a), then $\alpha$ annihilates $H^j_I(A)$; therefore, if $H^j_I(A) \neq (0)$ and such a nonzero $\alpha$ exists, then $\Ann_A(H^j_I(A))$ is a nonzero proper ideal of $A$.
\end{enumerate}
\end{prop}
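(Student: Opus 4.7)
My strategy is to deduce part (a) from a ``flat base change for $\Ext$'' applied to the endomorphism of $A$ that sends each variable to its $\ell$-th power, and to obtain part (b) from the standard identification of local cohomology as a direct limit of $\Ext$'s.

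For part (a), consider the ring homomorphism $\phi_\ell : A \to A$ defined by $x_i \mapsto x_i^\ell$. Viewed as an $A$-algebra via $\phi_\ell$, the ring $A$ is a \emph{free} $A$-module on the monomial basis $\{x_1^{b_1} \cdots x_n^{b_n} : 0 \leq b_i < \ell\}$; in particular $\phi_\ell$ is flat. Since $I$ is generated by monomials $m_1, \ldots, m_k$ and $\phi_\ell(m_j) = m_j^\ell$, direct computation gives
$$A/I \otimes_{A, \phi_\ell} A \cong A/I_\ell \quad \text{and} \quad A \otimes_{A, \phi_\ell} A \cong A.$$
Because $A$ is Noetherian and $A/I$ is finitely presented, the standard flat base change isomorphism for $\Ext$ yields
$$\Ext^j_A(A/I, A) \otimes_{A, \phi_\ell} A \cong \Ext^j_A(A/I_\ell, A).$$
Since $\alpha \in \ZZ$ is central and annihilates the first tensor factor on the left, it annihilates the tensor product, and hence, via the isomorphism, annihilates $\Ext^j_A(A/I_\ell, A)$.

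For part (b), I would invoke the classical identification $H^j_I(A) = \varinjlim_n \Ext^j_A(A/I^n, A)$, valid because $I$ is finitely generated. The sequence $\{I_\ell\}_{\ell \geq 1}$ is cofinal with $\{I^n\}_{n \geq 1}$: one has $I_\ell \subseteq I^\ell$ trivially, while by pigeonhole any product of $k(\ell-1)+1$ of the $m_j$'s is divisible by some $m_j^\ell$, giving $I^{k(\ell-1)+1} \subseteq I_\ell$. Therefore $H^j_I(A) = \varinjlim_\ell \Ext^j_A(A/I_\ell, A)$, and part (a) says $\alpha$ annihilates every term of this direct system, hence annihilates the colimit $H^j_I(A)$. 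The final assertion is then immediate: $\alpha \in \Ann_A(H^j_I(A))$ with $\alpha \neq 0$ gives a nonzero annihilator, while $H^j_I(A) \neq (0)$ forces $\Ann_A(H^j_I(A)) \subsetneq A$.

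The only nontrivial ingredient is the flat base change formula for $\Ext$, which is classical for finitely presented modules and flat algebras. The main conceptual point---and arguably the only obstacle worth flagging---is recognizing the ``$\ell$-th power'' endomorphism $\phi_\ell$ as the correct flat extension relating $I$ and $I_\ell$; this trick succeeds precisely because $I$ is generated by \emph{monomials}, so that $\phi_\ell$ carries each generator to its $\ell$-th power, and because $\phi_\ell$ makes $A$ free (and hence flat) over itself.
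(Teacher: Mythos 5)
Your proof is correct and takes essentially the same approach as the paper. The paper factors your flat endomorphism $\phi_\ell \colon A \to A$, $x_i \mapsto x_i^\ell$, through the subring $A_\ell = \ZZ[x_1^\ell,\dots,x_n^\ell]$: it first uses the ring isomorphism $A \xrightarrow{\sim} A_\ell$ (which is $\ZZ$-linear and sends $I$ to $I_\ell$) to conclude $\alpha$ kills $\Ext^j_{A_\ell}(A_\ell/I_\ell,A_\ell)$, then uses freeness of $A$ over $A_\ell$ to get $\Ext^j_A(A/I_\ell,A) \cong \Ext^j_{A_\ell}(A_\ell/I_\ell,A_\ell)\otimes_{A_\ell}A$; your version does both in one step via flat base change along $\phi_\ell$, and your cofinality check for $\{I_\ell\}$ with $\{I^n\}$ is a minor elaboration that the paper takes for granted.
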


\begin{proof}
Part (b) follows immediately from part (a) since $H^j_I(A)\cong \varinjlim_{\ell}\Ext^j_A(A/I_{\ell},A)$. To prove part (a), let $\ell \geq 1$ be given and consider the ring $A_{\ell}=\ZZ[x^{\ell}_1,\dots,x^{\ell}_n]$. Since $\alpha$ is an integer, $I$ (and therefore $I_{\ell}$) is generated by monomials, and $\alpha\Ext^j_A(A/I,A)=(0)$, it follows that $\alpha\Ext^j_{A_\ell}(A_{\ell}/I_{\ell},A_{\ell})=(0)$. It is clear that $A$ is a free $A_{\ell}$-module, and hence
\[
\Ext^j_A(A/I_{\ell},A)\cong \Ext^j_{A_\ell}(A_{\ell}/I_{\ell},A_{\ell})\otimes_{A_{\ell}}A
\]
as $A$-modules. It follows immediately that $\alpha\Ext^j_A(A/I_{\ell},A)=(0)$, completing the proof.
\end{proof}

For the rest of this section, we consider the following example, considered by Reisner in \cite[Remark 3]{Reisner} and associated with a minimal triangulation of the real projective plane. Let $A=\ZZ[x_0,\dots,x_5]$ and let $I$ be the ideal of $A$ generated by the $10$ monomials
\[
\{x_0x_1x_2, x_0x_1x_3, x_0x_2x_4, x_0x_3x_5, x_0x_4x_5, x_1x_2x_5, x_1x_3x_4, x_1x_4x_5, x_2x_3x_4, x_2x_3x_5\}.
\]
It is well-known that $\cd(I,A) \leq 4$ ({\it cf.} \cite{SchmittVogel}), {\it i.e.} $H^j_I(A)=(0)$ for $j\geq 5$. As we will see in the next example $H^4_I(A)\neq 4$ and hence $H^4_I(A)$ is a top local cohomology module. This allows us to give a negative answer to Question \ref{Hochster question annihilator} as follows.

\begin{example}\label{main counterexample}
Let $A$ and $I$ be as in the previous paragraph, and let $\fm=(2,x_0,\dots,x_5)$. It is straightforward to check\footnote{This can be seen directly from Hochster's formula; alternatively, as indicated by Lyubeznik in \cite{LyubeznikMonomialSupport}, it can be computed using the Taylor resolution of $A/I$.} that $\Ext^4_A(A/I,A)\cong A/\fm$, which is annihilated by $2$. As in Proposition \ref{integer torsion stays}, for all $\ell \geq 1$, let $I_{\ell}$ be the ideal generated by the $\ell$-th powers of the displayed monomial generators of $I$. Since $\Ext^4_A(A/I,A)\cong A/\fm$, it follows from the proof of Proposition \ref{integer torsion stays} that
\[
\Ext^4_A(A/I_{\ell},A) \cong A/(2,x^{\ell}_0,\dots,x^{\ell}_5)
\] 
for all $\ell \geq 1$. It follows from \cite[Theorem 1]{LyubeznikMonomialSupport}\footnote{In Lyubeznik's theorem, the ring was assumed to be a local ring containing a field. However, since each $\Ext^4_A(A/I_{\ell},A)$ is annihilated by 2 and supported only in the maximal ideal $\fm$, it is naturally a module over the local ring $\bar{R}$; hence Lyubeznik's theorem is applicable.} that the transition map $\Ext^4_A(A/I_{\ell},A)\to \Ext^4_A(A/I_{\ell+1},A)$ is injective for each $\ell\geq 1$. Therefore $H^4_I(A)\neq 0$ and is supported only in the maximal ideal $\fm$. It follows from Proposition \ref{integer torsion stays} that $2 \cdot \Ext^4_A(A/I_{\ell},A)=(0)$ for all $\ell \geq 1$; and hence $2
\cdot H^4_I(A)=(0)$. Now consider the $\fm$-adic completion $\widehat{A}^{\fm}$ of $A$, a complete unramified regular local ring. We again write $I$ for the ideal $I\widehat{A}^{\fm} \subseteq \widehat{A}^{\fm}$. (Concretely, the ring $\widehat{A}^{\fm}$ is isomorphic to $\mathbb{Z}_2[[x_0, \ldots, x_5]]$, where $\mathbb{Z}_2$ denotes the ring of $2$-adic integers, and $I\widehat{A}^{\fm}$ is generated by the same ten monomials.) Local cohomology commutes with the flat base change $A \rightarrow \widehat{A}^{\fm}$, so $H^4_I(\widehat{A}^{\fm}) \neq (0)$ and $H^j_I(\widehat{A}^{\fm}) = (0)$ for all $j > 4$. Consequently $H^4_I(\widehat{A}^{\fm})$ is a top local cohomology module that is annihilated by $2$. In particular, $\Ann_{\widehat{A}^\fm}(H^4_I(\widehat{A}^{\fm})) = (2)$ by Theorem \ref{annihilators are powers of p}.
\end{example}

\begin{remark}[Arithmetic rank of the ideal $I$]
\label{arithmetic rank}
It follows from \cite[Example 5)]{SchmittVogel} that the ideal $I$ can be defined up to radical by 4 elements
\[
\{x_0x_3x_5, x_0x_1x_3+x_0x_4x_5+x_2x_3x_5,x_0x_2x_4+x_1x_2x_5+x_1x_3x_4, x_0x_1x_2+x_1x_4x_5+x_2x_3x_4\}
\]
in both $A$ and $\widehat{A}^{\fm}$. Since it is shown in Example \ref{main counterexample} that $H^4_I(A)\neq 0$ and $H^4_I(\widehat{A}^{\fm})\neq 0$, the arithmetic rank of $I$ must be 4 in both $A$ and $\widehat{A}^{\fm}$ and $\cd(A,I) = \cd(\widehat{A}^{\fm},I)=4$.
\end{remark}

\begin{remark}
During the preparation of this paper, we learned that Hochster and Jeffries obtained the following result: {\it Let $(R,\fm)$ be a Noetherian local domain of characteristic $p$. Assume that the arithmetic rank of an ideal $I$ is the same as its cohomological dimension, which is denoted by $\delta$. Then $H^{\delta}_I(R)$ is faithful.}

The combination of Example \ref{main counterexample} and Remark \ref{arithmetic rank} shows that the mixed-characteristic analogue of the aforementioned Hochster-Jeffries result does not hold.  
\end{remark}


Hern\'{a}ndez, N\'{u}{\~n}ez-Betancourt, P\'{e}rez, and Witt also studied the module $H^4_I(\widehat{A}^{\fm})$, concluding \cite[Theorem 6.3]{HNPW1} that this module has zero-dimensional support while its injective dimension as an $\widehat{A}^{\fm}$-module is equal to $1$. We finish this section with a finer analysis of the structure of $H^4_I(\widehat{A}^{\fm})$, that in particular recovers this result of \cite{HNPW1}.

\begin{prop}
\label{Reisner example is injective hull mod 2}
Let $(R,\fm)=\widehat{A}^{\fm}$ denote the completion of $A$ at the maximal ideal $\fm=(2,x_0,\dots,x_5)$ and let $\bar{R}$ denote $R/(2)$. Then $H^4_I(R) \cong E_{\bar{R}}(\bar{R}/\fm)$ as $R$-modules.
\end{prop}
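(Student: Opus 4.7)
The plan is to identify $H^4_I(R)$ as an injective $\bar R$-module whose $\mathbb{F}_2$-socle has dimension one; any such module is necessarily isomorphic to $E_{\bar R}(\bar R/\fm)$.

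First, I would apply the long exact sequence of $H^*_I(-)$ to the short exact sequence $0 \to R \xrightarrow{\cdot 2} R \to \bar R \to 0$. The vanishing $H^5_I(R) = 0$ (since $\cd(R,I) = 4$) together with $2\cdot H^4_I(R) = 0$ (both established in Example \ref{main counterexample}) force the natural map $H^4_I(R) \to H^4_I(\bar R) = H^4_{I\bar R}(\bar R)$ to be an isomorphism. As both sides are annihilated by $2$, this is simultaneously an isomorphism of $R$- and $\bar R$-modules, reducing the problem to an analysis of $H^4_{I\bar R}(\bar R)$ over $\bar R$.

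Second, I would establish injectivity of $H^4_{I\bar R}(\bar R)$ as an $\bar R$-module. By Example \ref{main counterexample}, $H^4_I(R)$ is supported only at $\fm$; transferred to $\bar R$, this means $H^4_{I\bar R}(\bar R)$ is supported only at $\fm\bar R$, a zero-dimensional set in $\Spec(\bar R)$. Since $\bar R$ is a regular local ring of characteristic $2$, the Huneke--Sharp theorem bounds the injective dimension of $H^4_{I\bar R}(\bar R)$ over $\bar R$ by the dimension of its support, forcing it to be $0$; thus $H^4_{I\bar R}(\bar R)$ is injective over $\bar R$. An injective $\bar R$-module supported at the maximal ideal is a direct sum of copies of $E_{\bar R}(\bar R/\fm)$, so $H^4_I(R) \cong E_{\bar R}(\bar R/\fm)^{\oplus \mu}$ for some $\mu \geq 1$, with $\mu$ equal to $\dim_{\mathbb{F}_2}\Soc_{\bar R}(H^4_I(R))$.

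Third, I would compute the socle by writing $H^4_I(R) = \varinjlim_\ell \Ext^4_R(R/I_\ell, R)$ and invoking the identifications $\Ext^4_R(R/I_\ell, R) \cong \bar R/(x_0^\ell, \ldots, x_5^\ell)$ from Example \ref{main counterexample}. Each such term is a finite-length local $\bar R$-module whose socle is the one-dimensional $\mathbb{F}_2$-span of $\overline{x_0^{\ell-1}\cdots x_5^{\ell-1}}$. The transition maps in this direct system are $R$-linear (hence $\bar R$-linear, as all modules are $2$-torsion) and injective (Example \ref{main counterexample}), so their restrictions to the one-dimensional socles are nonzero, and hence isomorphisms. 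Since $\bar R/\fm$ is finitely presented over the Noetherian ring $\bar R$, the functor $\Hom_{\bar R}(\bar R/\fm, -)$ commutes with filtered colimits, yielding
\[
\Soc_{\bar R}(H^4_I(R)) \cong \varinjlim_\ell \Soc_{\bar R}\bigl(\bar R/(x_0^\ell, \ldots, x_5^\ell)\bigr) \cong \mathbb{F}_2.
\]
Hence $\mu = 1$ and the desired isomorphism follows.

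The most delicate technical point will be the bookkeeping across the two ring structures $R$ and $\bar R$: one must verify that the Ext isomorphisms of Example \ref{main counterexample} are honest isomorphisms of $\bar R$-modules, that the transition maps inherited from the $R$-side direct system are $\bar R$-linear, and that the Huneke--Sharp hypotheses apply cleanly to the complete regular local ring $\bar R$ of characteristic $2$. These are routine but essential for the socle computation to go through.
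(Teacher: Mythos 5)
Your proof is correct, and it takes a genuinely different route from the paper. The paper proceeds by explicitly identifying the transition maps $\Ext^4_A(A/I_{\ell},A)\to \Ext^4_A(A/I_{\ell+1},A)$ as multiplication by $x_0\cdots x_5$ on the quotients $A/(2,x_0^\ell,\ldots,x_5^\ell)$ (using the colon-ideal computation $(x_0^{\ell+1},\ldots,x_5^{\ell+1}):(x_0^\ell,\ldots,x_5^\ell) = (x_0^{\ell+1},\ldots,x_5^{\ell+1},x_0\cdots x_5)$), then recognizes the resulting direct system as the standard one computing the top local cohomology of $\bar R$ at its maximal ideal, which is $E_{\bar R}(\bar R/\fm)$. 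Your argument instead avoids pinning down the transition maps at all: you use the long exact sequence to transport the problem to $\bar R$, invoke Huneke--Sharp to get injectivity over $\bar R$ from the zero-dimensionality of the support, and then a socle count (using only the injectivity of the transition maps, which the paper already establishes via Lyubeznik's theorem) to rule out multiplicity greater than one. What the paper's approach buys is elementarity and concreteness --- everything reduces to a Koszul-type direct limit, with no appeal to the Huneke--Sharp finiteness theorem. What your approach buys is robustness: you never need to know exactly what the transition maps are, only that they are injective, and the injectivity of $H^4_{I\bar R}(\bar R)$ over $\bar R$ comes for free from general theory. Both are valid; yours is a nice structural alternative. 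One small caveat worth checking in your write-up: when you say ``The most delicate technical point will be the bookkeeping across the two ring structures'' you are right to flag it, but it is indeed routine here since every module in sight is $2$-torsion, so the $R$- and $\bar R$-module structures determine each other and all the relevant maps ($\alpha$ from the long exact sequence, the transition maps, the socle inclusions) are automatically $\bar R$-linear.
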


\begin{proof}
As we have seen in Example \ref{main counterexample} that the transition map $\Ext^4_A(A/I_{\ell},A)\to \Ext^4_A(A/I_{\ell+1},A)$ is injective for each $\ell\geq 1$. Since $(x^{\ell+1}_0,\dots,x^{\ell+1}_5):(x^{\ell}_0,\dots,x^{\ell}_5)=(x^{\ell+1}_0,\dots,x^{\ell+1}_5, x_0\cdots x_5)$ (which holds in both $A$ and $A/(2)$), it is straightforward to check that this transition map is given by 
\[\frac{A}{(2,x^{\ell}_0,\dots,x^{\ell}_5)}\xrightarrow{\cdot x_0\cdots x_5}\frac{A}{(2,x^{\ell+1}_0,\dots,x^{\ell+1}_5)}.\]
Therefore, 
\begin{align*}
H^4_I(A)&\cong \varinjlim_{\ell} \Ext^4_A(A/I_{\ell},A)\\
&\cong \varinjlim_{\ell} (\cdots\to  \frac{A}{(2,x^{\ell}_0,\dots,x^{\ell}_5)}\xrightarrow{\cdot x_0\cdots x_5}\frac{A}{(2,x^{\ell+1}_0,\dots,x^{\ell+1}_5)}\to \cdots)\\
&\cong H^6_{\fm}(A/(2))\\
&\cong H^6_{\fm}(\bar{R})
\end{align*} 
Therefore $H^4_I(R)\cong H^4_I(A)\otimes_AR\cong H^6_{\fm}(\bar{R})\cong E_{\bar{R}}(\bar{R}/\fm)$ as $R$-modules. This completes the proof.
\end{proof}

Inspired by a question of Hellus (\cite{HellusHabilitation}), Lyubeznik and Yildirim conjectured in \cite[Conjecture 1]{LyubeznikYildirimPAMS2018} that
\begin{conjecture}
\label{Lyubeznil-Yildirim}
Let $(R,\fm)$ be a regular local ring and $I$ be a non-zero ideal of $R$. If $H^j_I(R)\neq 0$, then $0\in \Ass_R(D(H^j_I(R)))$.
\end{conjecture}

\begin{remark}
\label{counter-example to LY}
It is observed in \cite{WaltherZhangSurvey} that our Proposition \ref{Reisner example is injective hull mod 2} provides an counterexample to Conjecture \ref{Lyubeznil-Yildirim}: let $R,I$ be the same as in Proposition \ref{Reisner example is injective hull mod 2}, then $D(H^4_I(R))\cong R/(2)$ and hence $0\notin \Ass_R(D(H^4_I(R)))$.
\end{remark}
\bibliographystyle{plain}

\bibliography{masterbib}

\end{document}